\newtheorem{theorem}{Theorem}
\theoremstyle{plain}
\newtheorem{corollary}[theorem]{Corollary}
\newtheorem{example}[theorem]{Example}
\newtheorem{lemma}[theorem]{Lemma}
\newtheorem{proposition}[theorem]{Proposition}
\numberwithin{equation}{section} \setlength{\textwidth} {130 true
\newcommand{\figcaption}{\def\@captype{figure}\caption}
\newcommand{\tabcaption}{\def\@captype{table}\caption}
\begin{document}

\title[Midy's Sets]{Structure of associated sets to Midy's Property}
\author[J.H. Castillo]{John H. Castillo}
\address{John H. Castillo, Departamento de Matemáticas y Estadística, Universidad de Nariño}
\email{jhcastillo@gmail.com}
\author[G. García-Pulgarín]{Gilberto Garc\'\i a-Pulgar\'in}
\address{Gilberto Garc\'\i a-Pulgar\'in, Universidad de Antioquia}
\email{gigarcia@ciencias.udea.edu.co}
\author[J.M Velásquez Soto]{Juan Miguel Vel\'asquez-Soto}
\address{Juan Miguel Vel\'asquez Soto, Departamento de Matemáticas, Universidad del Valle}
\email{jumiveso@univalle.edu.co}
 \subjclass[2000]{11A05, 11A07,
11A15, 11A63, 16U60} \keywords{Period, decimal representation, order
of an integer, multiplicative group of units modulo $N$}

\begin{abstract}
Let $b$ be a positive integer greater than $1$,  $N$ a positive
integer relatively prime to $b$, $
\left\vert b\right\vert _{N}$ the order of $b$ in the multiplicative group $%
\mathbb{U}_{N}$ of positive integers less than $N$ and relatively primes to $%
N,$ and $x\in\mathbb{U}_{N}$. It is well known that when we write
the fraction $\frac{x}{N}$ in base $b$, it is periodic. Let $d,\,k$ be positive integers with $%
d\geq2$ and such that $\left\vert b\right\vert _{N}=dk$ and $\frac{x}{N}=0.%
\overline{a_{1}a_{2}\cdots a_{\left\vert b\right\vert _{N}}}$ with
the bar indicating the period and $a_{i}$ are digits in base $b$. We
separate the period ${a_{1}a_{2}\cdots a_{\left\vert b\right\vert
_{N}}}$ in $d$ blocks of length $k$ and let $
A_{j}=[a_{(j-1)k+1}a_{(j-1)k+2}\cdots a_{jk}]_{b} $
be the number represented in base $b$ by the $j-th$ block and $%
S_{d}(x)=\sum\limits_{j=1}^{d}A_{j}$. If for all
$x\in\mathbb{U}_{N}$, the sum $S_{d}(x)$ is a multiple of $b^{k}-1$
we say that $N$ has the Midy's property for $b$ and $d$.

In this work we present some interesting properties of the set of
positive integers $d$ such that $N$ has the Midy's property to for
$b$  and $d$.

%\textbf{AMSC(2010)}:

\end{abstract}

\maketitle

\section{Introduction}

Let $b$ be a positive integer greater than $1$, $b$ will denote the
base of
numeration, $N$ a positive integer relatively prime to $b$, i.e $(N,b)=1$, $%
\left\vert b\right\vert _{N}$ the order of $b$ in the multiplicative group $%
\mathbb{U}_{N}$ of positive integers less than $N$ and relatively primes to $%
N,$ and $x\in\mathbb{U}_{N}$. It is well known that when we write
the fraction $\frac{x}{N}$ in base $b$, it is periodic. By period we
mean the smallest repeating sequence of digits in base $b$ in such
expansion, it is easy to see that $\left\vert b\right\vert _{N}$ is
the length of the period of the fractions $\frac{x}{N}$ (see
Exercise 2.5.9 in \cite{Nathanson}). Let $d,\,k$ be positive integers with $%
d\geq2$ and such that $\left\vert b\right\vert _{N}=dk$ and $\frac{x}{N}=0.%
\overline{a_{1}a_{2}\cdots a_{\left\vert b\right\vert _{N}}}$ with
the bar indicating the period and $a_{i}$ are digits in base $b$. We
separate the period ${a_{1}a_{2}\cdots a_{\left\vert b\right\vert
_{N}}}$ in $d$ blocks of length $k$ and let
\begin{equation*}
A_{j}=[a_{(j-1)k+1}a_{(j-1)k+2}\cdots a_{jk}]_{b}
\end{equation*}
be the number represented in base $b$ by the $j-th$ block and $%
S_{d}(x)=\sum\limits_{j=1}^{d}A_{j}$. If for all
$x\in\mathbb{U}_{N}$, the sum $S_{d}(x)$ is a multiple of $b^{k}-1$
we say that $N$ has the Midy's property for $b$ and $d$. It is named
after E. Midy (1836), to read historical aspects about this property
see \cite{Lewittes} and its references.

If $D_{b}(N)$ is the number in base $b$ represented by the period of $\frac {%
1}{N}$, this is $D_{b}(N)=[a_{1}a_{2}\cdots a_{\left\vert
b\right\vert _{N}}]_{b},$ it is easy to see that
$ND_{b}(N)=b^{\left\vert b\right\vert _{N}}-1$. We denote with
$\mathcal{M}_{b}(N)$ the set of positive integers $d$ such that $N$
has the Midy's property for $b$ and $d$ and we will call it the
Midy's set of $N$ to base $b$. As usual, let $\nu_{p}(N)$ be the
greatest exponent of $p$ in the prime factorization of $N$.

For example $13$ has the Midy's property to the base $10$ and $d=3$,
because $|13|_{10}=6$, $1/13=0.\overline{076923}$ and $07+69+23=99$.
Also, $49$ has the Midy's property to the base $10$ and $d=14$,
since $|49|_{10}=42$,
$1/49=0.\overline{020408163265306122448979591836734693877551}$ and
$020+408+163+265+306+122+448+979+591+836+734+693+877+551=7*999$. But
$49$ does not have the Midy's property to $10$ and $7$. Actually, we
can see that $\mathcal{M}_{10}(13)=\{2,3,6\}$ and
$\mathcal{M}_{10}(49)=\{2, 3, 6, 14, 21, 42\}$.

In \cite{garcia09} are given the following characterizations of the
Midy's pro\-per\-ty.

\begin{theorem}
\label{d}Let $N,b$ and $d$ as above, $d\in \mathcal{M}_{b}(N)$ if
and only if $D_{b}(N)\equiv 0 \pmod{b^{k}-1}$. Furthermore, if $d\in
\mathcal{M}_{b}(N)$ and $D_{b}(N)=(b^{k}-1)t$, for some integer $t$,
then $b^{|b|_N}-1=(b^{k}-1)Nt$.
\end{theorem}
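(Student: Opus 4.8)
The plan is to reduce the whole statement to one congruence modulo $b^{k}-1$, obtained after identifying the integer represented by the period of $\frac{x}{N}$ for an arbitrary $x\in\mathbb{U}_{N}$. First I would start from $\frac{x}{N}=0.\overline{a_{1}a_{2}\cdots a_{|b|_{N}}}$ and multiply by $b^{|b|_{N}}$: since the expansion has period length $|b|_{N}$, the fractional part is unchanged, so $b^{|b|_{N}}\frac{x}{N}=[a_{1}a_{2}\cdots a_{|b|_{N}}]_{b}+\frac{x}{N}$. Hence $[a_{1}a_{2}\cdots a_{|b|_{N}}]_{b}=\dfrac{x\left(b^{|b|_{N}}-1\right)}{N}=xD_{b}(N)$, where the last equality uses $ND_{b}(N)=b^{|b|_{N}}-1$. (Here the digits $a_{i}$ depend on $x$, not only on $N$.)

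Next, splitting the period into the $d$ blocks of length $k$ exhibits $xD_{b}(N)$ in base $b^{k}$ as $xD_{b}(N)=\sum_{j=1}^{d}A_{j}\,b^{k(d-j)}$. Because $b^{k}\equiv 1\pmod{b^{k}-1}$, every power $b^{k(d-j)}$ is $\equiv 1$, so reducing modulo $b^{k}-1$ gives
\[
S_{d}(x)=\sum_{j=1}^{d}A_{j}\equiv xD_{b}(N)\pmod{b^{k}-1}.
\]
Therefore $b^{k}-1\mid S_{d}(x)$ if and only if $b^{k}-1\mid xD_{b}(N)$, for each individual $x$.

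From here the equivalence is immediate. If $d\in\mathcal{M}_{b}(N)$, then taking $x=1$ yields $b^{k}-1\mid D_{b}(N)$, i.e. $D_{b}(N)\equiv 0\pmod{b^{k}-1}$; conversely, if $D_{b}(N)\equiv 0\pmod{b^{k}-1}$ then $b^{k}-1\mid xD_{b}(N)$ for every $x$, hence $b^{k}-1\mid S_{d}(x)$ for all $x\in\mathbb{U}_{N}$, so $d\in\mathcal{M}_{b}(N)$. For the ``furthermore'' part, writing $D_{b}(N)=(b^{k}-1)t$ and multiplying by $N$, together with $ND_{b}(N)=b^{|b|_{N}}-1$, gives at once $b^{|b|_{N}}-1=(b^{k}-1)Nt$. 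I do not expect a genuine obstacle here; the only step requiring care is the book-keeping in the shift identity --- checking that shifting by exactly $|b|_{N}$ digits returns $\frac{x}{N}$, and that $0<xD_{b}(N)<b^{|b|_{N}}-1$ so that $a_{1}\cdots a_{|b|_{N}}$ really is the base-$b$ digit string of $xD_{b}(N)$ (with leading zeros allowed). The rest is elementary modular arithmetic.
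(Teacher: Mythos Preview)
Your argument is correct. The paper does not actually contain a proof of this theorem: it is quoted from \cite{garcia09} as one of the known characterizations of Midy's property, so there is no ``paper's own proof'' to compare against. Your approach---identifying the integer $[a_{1}\cdots a_{|b|_{N}}]_{b}$ as $xD_{b}(N)$ via the shift identity, then reducing the base-$b^{k}$ expansion modulo $b^{k}-1$---is the natural one and each step is sound. The choice $x=1$ for the forward direction and the trivial implication $b^{k}-1\mid D_{b}(N)\Rightarrow b^{k}-1\mid xD_{b}(N)$ for the converse are exactly right, and the ``furthermore'' clause is indeed an immediate consequence of $ND_{b}(N)=b^{|b|_{N}}-1$.
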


\begin{theorem}
\label{ppl}Let $N,b$ and $d$ as above, $d\in \mathcal{M}_{b}(N)$ if
and only if for all prime $p$ divisor of $N$ it satisfies that if
$|b|_{p}\mid k$, then $\nu_{p}(N)\leq\nu_{p}(d)$. Furthermore, if
$d\in \mathcal{M}_{b}(N)$, then $\sum\nolimits_{i=1}^{d}(b^{ik}
\mod{N})=m_{b}(d,\ N)N$.
\end{theorem}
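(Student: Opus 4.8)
The plan is to reduce membership in $\mathcal{M}_b(N)$ to a single congruence and then to test that congruence one prime at a time via $p$-adic valuations. First I would invoke Theorem~\ref{d}: $d\in\mathcal{M}_b(N)$ if and only if $b^{k}-1\mid D_b(N)$. Feeding in the identity $ND_b(N)=b^{|b|_N}-1=b^{dk}-1=(b^{k}-1)\sum_{j=0}^{d-1}b^{jk}$ and cancelling the factor $b^{k}-1$, one sees that $b^{k}-1\mid D_b(N)$ is equivalent to $N\mid\sum_{j=0}^{d-1}b^{jk}$; that is,
\[
d\in\mathcal{M}_b(N)\iff N\mid\sum_{j=0}^{d-1}b^{jk}.
\]
This equivalence already yields the second assertion: since $b^{dk}\equiv1\pmod N$, re-indexing gives $\sum_{i=1}^{d}b^{ik}\equiv\sum_{j=0}^{d-1}b^{jk}\pmod N$, so when $d\in\mathcal{M}_b(N)$ the positive integer $\sum_{i=1}^{d}(b^{ik}\bmod N)$ is divisible by $N$, and I take $m_b(d,\ N)$ to be the quotient.

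Next I would localise: $N\mid\sum_{j=0}^{d-1}b^{jk}$ holds precisely when $\nu_p\!\big(\sum_{j=0}^{d-1}b^{jk}\big)\ge\nu_p(N)$ for every prime $p\mid N$. Fix such a $p$, put $m=|b|_p$, and note that $m\mid|b|_N=dk$. Writing $\sum_{j=0}^{d-1}b^{jk}=(b^{dk}-1)/(b^{k}-1)$, one has $\nu_p\!\big(\sum_{j=0}^{d-1}b^{jk}\big)=\nu_p(b^{dk}-1)-\nu_p(b^{k}-1)$, and I would evaluate this by the Lifting-the-Exponent lemma, splitting on whether $m$ divides $k$. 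If $m\mid k$, then $b^{k}\equiv1\pmod p$; applying the lemma with base exponent $m$ to both $b^{dk}-1$ and $b^{k}-1$ gives $\nu_p(b^{dk}-1)=\nu_p(b^{m}-1)+\nu_p(dk)-\nu_p(m)$ and $\nu_p(b^{k}-1)=\nu_p(b^{m}-1)+\nu_p(k)-\nu_p(m)$, so the valuation of the sum equals $\nu_p(dk)-\nu_p(k)=\nu_p(d)$, and the local condition at $p$ becomes exactly $\nu_p(N)\le\nu_p(d)$. If instead $m\nmid k$, then $b^{k}\not\equiv1\pmod p$, so $\nu_p(b^{k}-1)=0$ and the valuation of the sum equals $\nu_p(b^{dk}-1)$, which is already $\ge\nu_p(N)$ because $N\mid b^{|b|_N}-1=b^{dk}-1$; hence the local condition at $p$ is automatically satisfied. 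Assembling these two cases over all primes $p\mid N$ gives exactly the stated condition that $|b|_p\mid k$ implies $\nu_p(N)\le\nu_p(d)$, and together with the first paragraph this proves the theorem.

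I expect the main obstacle to be the valuation bookkeeping in the case $m\mid k$: one must keep straight which exponent ($m$, $k$, or $dk$) plays the role of the base in each application of Lifting-the-Exponent, and in particular the prime $p=2$ has to be handled on its own, since there the lemma reads $\nu_2(a^{n}-1)=\nu_2(a-1)+\nu_2(a+1)+\nu_2(n)-1$ for even $n$ rather than $\nu_2(a-1)+\nu_2(n)$; that prime must be checked separately under the paper's standing hypotheses. The reduction in the first paragraph and the passage to local conditions are otherwise routine.
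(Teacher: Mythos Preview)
The paper does not supply its own proof of this theorem: it is quoted, without argument, from \cite{garcia09}. So there is nothing in the present paper to compare your proposal against.

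On its own merits, your reduction is sound: the passage from Theorem~\ref{d} to the criterion $N\mid\sum_{j=0}^{d-1}b^{jk}$ is correct, and for odd primes $p\mid N$ your Lifting-the-Exponent computation cleanly gives $\nu_p\bigl(\sum_{j=0}^{d-1}b^{jk}\bigr)=\nu_p(d)$ whenever $|b|_p\mid k$, while the case $|b|_p\nmid k$ is vacuous exactly as you say.

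Your caveat about $p=2$ is not merely bookkeeping, however: the statement as written is actually false at that prime. Take $b=3$, $N=4$, $d=2$, $k=1$. Then $|b|_N=2$ and $\sum_{j=0}^{1}3^{j}=4$ is divisible by $N$, so $2\in\mathcal{M}_3(4)$ (indeed $1/4=0.\overline{02}_3$ and $0+2=2=b^{k}-1$). Yet $|3|_2=1\mid k$ while $\nu_2(N)=2>1=\nu_2(d)$, contradicting the theorem's criterion. The discrepancy arises exactly where you flag it: when $k$ is odd, $d$ is even, and $\nu_2(b+1)\ge2$, the $2$-adic form of Lifting-the-Exponent gives $\nu_2\bigl(\sum_{j}b^{jk}\bigr)=\nu_2(d)+\nu_2(b+1)-1>\nu_2(d)$, so the local condition at $2$ is genuinely weaker than $\nu_2(N)\le\nu_2(d)$. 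Your argument therefore proves the equivalence for all odd primes dividing $N$, and simultaneously shows that the stated criterion (and likewise Theorem~\ref{ppl2}) needs a correction at $p=2$; presumably \cite{garcia09} either assumes $N$ odd or records a refined version there.
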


\begin{theorem}
\label{ppl2}Let $N,b$ and $d$ as above, $d\in \mathcal{M}_{b}(N)$ if and only if for all prime $p$ divisor of $%
(b^{k}-1,N)$ it satisfies that $\nu_{p}(N)\leq\nu_{p}(d)$.
\end{theorem}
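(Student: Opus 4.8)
The plan is to derive Theorem~\ref{ppl2} directly from Theorem~\ref{ppl}, by showing that the divisibility conditions occurring in the two statements describe exactly the same set of primes. Concretely, I would isolate and prove the elementary equivalence: for a prime $p$,
\[
\bigl(p\mid N \ \text{ and }\ |b|_p\mid k\bigr)\quad\Longleftrightarrow\quad p\mid (b^k-1,N).
\]
Granting this, the hypothesis in Theorem~\ref{ppl} — that $\nu_p(N)\le\nu_p(d)$ for every prime $p\mid N$ with $|b|_p\mid k$ — is word for word the hypothesis of Theorem~\ref{ppl2}, so the biconditional of Theorem~\ref{ppl} immediately yields the biconditional of Theorem~\ref{ppl2}.

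To establish the displayed equivalence, first observe that since $(N,b)=1$, any prime $p\mid N$ satisfies $(p,b)=1$, hence $b$ lies in $\mathbb{U}_p$ and $|b|_p$ is well defined. By the defining property of the multiplicative order, $|b|_p\mid k$ if and only if $b^k\equiv 1\pmod p$, i.e. $p\mid b^k-1$. Therefore ``$p\mid N$ and $|b|_p\mid k$'' is the same as ``$p\mid N$ and $p\mid b^k-1$'', which is precisely ``$p\mid (b^k-1,N)$''; conversely every prime divisor of $(b^k-1,N)$ divides $N$, so the side condition $p\mid N$ is automatic on the right-hand side. One must also unpack the quantifier in Theorem~\ref{ppl} carefully: it is a universally quantified conditional ranging over primes dividing $N$, so primes $p\mid N$ with $|b|_p\nmid k$ impose no constraint at all, and the effective content is exactly a statement about the prime divisors of $(b^k-1,N)$.

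I do not expect a genuine obstacle here: the entire argument is the bookkeeping above, the real work having already been done in Theorem~\ref{ppl}. If one instead wanted a proof of Theorem~\ref{ppl2} that does not invoke Theorem~\ref{ppl}, the plan would be to use Theorem~\ref{d} together with the factorization $b^{|b|_N}-1=(b^k-1)\sum_{i=0}^{d-1}b^{ik}$ to reduce $d\in\mathcal{M}_b(N)$ to the single congruence $N\mid\sum_{i=0}^{d-1}b^{ik}$, and then compare $p$-adic valuations prime by prime: for $p\mid N$ with $p\nmid b^k-1$ one gets $\nu_p\!\left(\sum_{i=0}^{d-1}b^{ik}\right)=\nu_p(b^{dk}-1)\ge\nu_p(N)$ for free, while for $p\mid(b^k-1,N)$ a lifting-the-exponent computation gives $\nu_p\!\left(\sum_{i=0}^{d-1}b^{ik}\right)=\nu_p(d)$, so that $\nu_p(N)\le\nu_p(d)$ is exactly the needed condition. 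In that alternative route the one delicate point would be the $p=2$ case of the lifting-the-exponent step.
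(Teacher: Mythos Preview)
Your derivation is correct: the equivalence $p\mid(b^{k}-1,N)\Longleftrightarrow\bigl(p\mid N\text{ and }|b|_{p}\mid k\bigr)$ is exactly the observation that makes Theorem~\ref{ppl} and Theorem~\ref{ppl2} reformulations of one another, and your justification of that equivalence is complete. Note, however, that the paper does not give its own proof of Theorem~\ref{ppl2}; all three of Theorems~\ref{d}, \ref{ppl}, and \ref{ppl2} are quoted from \cite{garcia09} as known characterizations, so there is no in-paper argument to compare against. Your approach---reducing \ref{ppl2} to \ref{ppl}---is the natural one, and your alternative sketch via Theorem~\ref{d} and a prime-by-prime valuation comparison is also sound in outline (with the $p=2$ caveat you already flagged).
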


\section{Structure of $\mathcal{M}_{b}(N)$}

Theorem \ref{ppl} tells us that the subgroup generated by $b^{k}$ in $%
\mathbb{U}_{N}$, $\left\langle b^{k}\right\rangle =\linebreak
\left\{ b^{jk}:j=0,1,\ldots,\ d-1\right\}$; is the key of a method
to obtain the value of the multiplier $m_{b}(d,\ N)$, because if
$d\in \mathcal{M}_{b}(N)$, then
\begin{equation*}
Nm_{b}(d,\ N)=\sum\nolimits_{i=1}^{d}(b^{ik} \mod N)\text{.}
\end{equation*}

The following result shows an interesting relationship between
$\left\langle
b^{k_{2}}\right\rangle $ and $\left\langle b^{k_{1}}\right\rangle $ when $%
k_{2}\mid k_{1}$.

\begin{theorem}
\label{d_divide}If $\left\vert b\right\vert _{N}=k_{1}d_{1}=k_{2}d_{2}$ and $%
d_{2}=cd_{1}$ for some integer $c\in\mathbb{Z}$; then
\begin{equation*}
\left\langle b^{k_{2}}\right\rangle
=\bigcup\limits_{r=0}^{c-1}\left( b^{rk_{2}}\left\langle
b^{k_{1}}\right\rangle \right)
\end{equation*}
where $b^{rk_{2}}\left\langle b^{k_{1}}\right\rangle =\left\{
b^{rk_{2}}x:x\in\left\langle b^{k_{1}}\right\rangle \right\} $.
\end{theorem}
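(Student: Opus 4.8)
The plan is to recognize the asserted identity as nothing more than the coset decomposition of the cyclic group $\langle b^{k_{2}}\rangle$ with respect to its subgroup $\langle b^{k_{1}}\rangle$. First I would extract from the hypotheses the relation between the two block lengths: since $k_{1}d_{1}=|b|_{N}=k_{2}d_{2}=k_{2}(cd_{1})$ and $d_{1}\neq 0$, cancellation gives $k_{1}=ck_{2}$. Consequently $b^{k_{1}}=(b^{k_{2}})^{c}\in\langle b^{k_{2}}\rangle$, so $\langle b^{k_{1}}\rangle$ is a subgroup of the cyclic group $\langle b^{k_{2}}\rangle$, and each set $b^{rk_{2}}\langle b^{k_{1}}\rangle=(b^{k_{2}})^{r}\langle b^{k_{1}}\rangle$ is a coset of that subgroup.

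Next I would prove the two inclusions. The inclusion $\bigcup_{r=0}^{c-1} b^{rk_{2}}\langle b^{k_{1}}\rangle\subseteq\langle b^{k_{2}}\rangle$ is immediate because $b^{rk_{2}}$ and every element of $\langle b^{k_{1}}\rangle=\langle b^{ck_{2}}\rangle$ already lie in $\langle b^{k_{2}}\rangle$, which is closed under multiplication. For the reverse inclusion, take an arbitrary element $b^{jk_{2}}$ of $\langle b^{k_{2}}\rangle$ with $0\leq j\leq d_{2}-1$; writing $j=qc+r$ with $0\leq r\leq c-1$ by the division algorithm, one gets $b^{jk_{2}}=b^{rk_{2}}(b^{ck_{2}})^{q}=b^{rk_{2}}(b^{k_{1}})^{q}\in b^{rk_{2}}\langle b^{k_{1}}\rangle$. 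This establishes the equality.

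Finally, although it is not strictly needed for the stated equality, I would record that this union is in fact disjoint and that the $c$ listed cosets are exactly all the cosets: using the standard formula $|b^{m}|_{N}=|b|_{N}/\gcd(|b|_{N},m)$ together with $k_{s}\mid|b|_{N}$, one gets $|\langle b^{k_{2}}\rangle|=d_{2}$ and $|\langle b^{k_{1}}\rangle|=d_{1}$, so the index $[\langle b^{k_{2}}\rangle:\langle b^{k_{1}}\rangle]$ equals $d_{2}/d_{1}=c$. I do not anticipate a genuine obstacle here: the only points requiring a little care are checking that $c$ is a positive integer (forced by $d_{1},d_{2}>0$) and that the order computations are legitimate, i.e. that $k_{1}$ and $k_{2}$ both divide $|b|_{N}$, which is built into the hypothesis $|b|_{N}=k_{1}d_{1}=k_{2}d_{2}$.
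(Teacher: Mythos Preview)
Your argument is correct and is essentially the paper's own proof: both derive $k_{1}=ck_{2}$ from the hypotheses and then use the division algorithm on the exponent $j$ (writing $j=qc+r$) to exhibit every $b^{jk_{2}}$ as $b^{rk_{2}}(b^{k_{1}})^{q}$. Your additional framing of the result as a coset decomposition, together with the index computation showing disjointness, is a nice conceptual gloss that the paper omits but is not a different method.
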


\begin{proof}
Since $d_{2}=cd_{1}$ the $d_{2}$ values of $j\in\{0,\ 1,\ \ldots,\
d_{2}-1\}$ can be divided by $c$ obtaining a quotient between $0$
and $d_{1}-1$ and a remainder between $0$ and $c-1$, in consequence
this values are the numbers
$ci+r$ with $0\leq i\leq d_{1}-1$ and $0\leq r\leq c-1$. Thus%

\begin{align*}
\left\langle b^{k_{2}}\right\rangle  &  =\left\{  b^{jk_{2}}:j=0, 1,
\ldots,
d_{2}-1\right\} \\
&  =\left\{  b^{k_{2}(ci+r)}:i=0, 1, \ldots,\ d_{1}-1,\ \ r=0,1,
\ldots,
c-1\right\} \\
&  =\left\{  b^{k_{1}i+rk_{2}}:i=0,1, \ldots, d_{1}-1, \ r=0,\ 1,
\ldots,\ c-1\right\} \\
&  =\bigcup\limits_{r=0}^{c-1}\left(  b^{rk_{2}}\left\langle b^{k_{1}%
}\right\rangle \right)
\end{align*}

\end{proof}

We get the following result as a consequence of the above fact.

\begin{corollary}
\label{coro}Let $d_{1}$, $d_{2}$ be divisors of $\left\vert
b\right\vert _{N}
$ and assume that $d_{1}\mid d_{2}$ and $d_{1}\in \mathcal{M}_{b}(N)$, then $%
d_{2}\in \mathcal{M}_{b}(N)$.
\end{corollary}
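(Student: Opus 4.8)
The plan is to deduce Corollary \ref{coro} directly from Theorem \ref{d_divide} together with the sum-criterion contained in Theorem \ref{ppl}. Write $|b|_N = k_1 d_1 = k_2 d_2$ where $k_i$ is the block length corresponding to $d_i$; since $d_1 \mid d_2$ we have $d_2 = c d_1$ for the integer $c = d_2/d_1$, and correspondingly $k_2 = k_1/c$, so $k_2 \mid k_1$ and the hypotheses of Theorem \ref{d_divide} are met. The key observation is that membership $d \in \mathcal{M}_b(N)$ is governed by whether $\sum_{i=1}^{d}(b^{ik} \bmod N)$ is a multiple of $N$, and by Theorem \ref{ppl} the quantity $\sum_{i=1}^{d}(b^{ik}\bmod N) = \sum_{y \in \langle b^k\rangle} y$ is exactly the sum of the residues in the subgroup $\langle b^k \rangle \subseteq \mathbb{U}_N$.

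First I would record that $d_1 \in \mathcal{M}_b(N)$ means, via Theorem \ref{ppl}, that $\sum_{y \in \langle b^{k_1}\rangle} y \equiv 0 \pmod N$. Next I would apply Theorem \ref{d_divide}, which expresses $\langle b^{k_2}\rangle$ as the disjoint union $\bigcup_{r=0}^{c-1} b^{rk_2}\langle b^{k_1}\rangle$ of cosets (disjoint because the union has the right cardinality $cd_1 = d_2 = |\langle b^{k_2}\rangle|$, or simply because cosets of a subgroup partition). Summing the residues over this partition gives
\begin{equation*}
\sum_{y \in \langle b^{k_2}\rangle} y \equiv \sum_{r=0}^{c-1} b^{rk_2} \Bigl(\sum_{y \in \langle b^{k_1}\rangle} y\Bigr) \pmod N,
\end{equation*}
where I have used that reduction mod $N$ is a ring homomorphism so that summing the actual integer residues is congruent mod $N$ to the corresponding sum of products. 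Since the inner sum is $\equiv 0 \pmod N$ by hypothesis, the whole right-hand side is $\equiv 0 \pmod N$, hence $\sum_{i=1}^{d_2}(b^{ik_2}\bmod N)$ is a multiple of $N$. Invoking Theorem \ref{ppl} in the converse direction then yields $d_2 \in \mathcal{M}_b(N)$.

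The only subtlety I anticipate is bookkeeping about the exact multiple rather than mere divisibility: Theorem \ref{ppl} as quoted gives an "if and only if" phrased through the prime-valuation condition, and separately asserts the value $\sum_{i=1}^{d}(b^{ik}\bmod N) = m_b(d,N)N$ when $d \in \mathcal{M}_b(N)$. I would therefore be careful to use the valuation characterization (or equivalently Theorem \ref{ppl2}) as the actual "if and only if" criterion, and treat the displayed coset-sum computation above as the verification that the divisibility-by-$N$ condition propagates from $d_1$ to $d_2$. If a cleaner route is wanted, one can bypass the sum entirely: by Theorem \ref{ppl2}, $d_1 \in \mathcal{M}_b(N)$ says $\nu_p(N) \le \nu_p(d_1)$ for every prime $p \mid (b^{k_1}-1, N)$; since $k_2 \mid k_1$ we have $b^{k_2}-1 \mid b^{k_1}-1$, so $(b^{k_2}-1,N) \mid (b^{k_1}-1,N)$, and since $d_1 \mid d_2$ we get $\nu_p(d_1) \le \nu_p(d_2)$; chaining these inequalities over every prime $p \mid (b^{k_2}-1,N)$ gives $\nu_p(N) \le \nu_p(d_2)$, which is precisely $d_2 \in \mathcal{M}_b(N)$. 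I would present the coset-union argument as the headline proof (since the corollary is advertised as a consequence of Theorem \ref{d_divide}) and perhaps remark on the valuation shortcut.
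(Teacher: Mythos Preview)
Your proposal is correct and matches the paper's intent: the paper gives no written proof at all for this corollary, merely stating it ``as a consequence of the above fact'' (Theorem~\ref{d_divide}), and your coset-sum argument is precisely the natural way to make that consequence explicit. Your caution about the converse direction of the sum criterion in Theorem~\ref{ppl} is well placed, and the clean valuation shortcut via Theorem~\ref{ppl2} that you sketch at the end is an entirely valid (and arguably tidier) alternative.
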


The following result is a dual version of this corollary.

\begin{proposition}
Let $N_{1}$, $N_{2}$ and $d$ be integers such that $d$ is a common
divisor of $\left\vert b\right\vert _{N_{1}}$ and $\left\vert
b\right\vert _{N_{2}}$, if $d\in \mathcal{M}_{b}(N_{2})$ and
$N_{1}\mid N_{2}$ then $d\in \mathcal{M}_{b}(N_{1})$.
\end{proposition}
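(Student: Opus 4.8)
The plan is to deduce the statement directly from the prime-by-prime criterion of Theorem \ref{ppl2} (equivalently Theorem \ref{ppl}). Write $k_i=|b|_{N_i}/d$ for $i=1,2$; these are positive integers since $d$ divides each $|b|_{N_i}$, and $d\geq 2$ because $d\in\mathcal{M}_b(N_2)$, so $N_i,b,d$ are ``as above'' and the theorems apply. The first step is the elementary observation that $N_1\mid N_2$ forces $|b|_{N_1}\mid|b|_{N_2}$: reducing the congruence $b^{|b|_{N_2}}\equiv 1\pmod{N_2}$ modulo $N_1$ shows that the order $|b|_{N_1}$ divides $|b|_{N_2}$. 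Dividing through by $d$ gives $k_1\mid k_2$. Also, $N_1\mid N_2$ gives $\nu_p(N_1)\leq\nu_p(N_2)$ for every prime $p$, and every prime dividing $N_1$ divides $N_2$.

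Now take any prime $p$ with $p\mid(b^{k_1}-1,N_1)$. From $p\mid b^{k_1}-1$ and $k_1\mid k_2$ we get $p\mid b^{k_2}-1$; together with $p\mid N_1\mid N_2$ this gives $p\mid(b^{k_2}-1,N_2)$. Applying Theorem \ref{ppl2} to $N_2$, using $d\in\mathcal{M}_b(N_2)$, yields $\nu_p(N_2)\leq\nu_p(d)$, and hence $\nu_p(N_1)\leq\nu_p(N_2)\leq\nu_p(d)$. Since $p$ was an arbitrary prime divisor of $(b^{k_1}-1,N_1)$, the criterion of Theorem \ref{ppl2} is verified for $N_1$, and therefore $d\in\mathcal{M}_b(N_1)$.

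There is essentially no hard step here; the only point requiring care is that the ``block length'' parameter is not the same for $N_1$ and $N_2$ — it equals $k_1$ in one case and $k_2$ in the other — so the whole argument hinges on the compatibility $k_1\mid k_2$, which comes from $|b|_{N_1}\mid|b|_{N_2}$. If one prefers to argue through Theorem \ref{ppl} instead, the same scheme runs verbatim: for a prime $p\mid N_1$ with $|b|_p\mid k_1$, use $k_1\mid k_2$ to get $|b|_p\mid k_2$, apply Theorem \ref{ppl} to $N_2$ to obtain $\nu_p(N_2)\leq\nu_p(d)$, and conclude $\nu_p(N_1)\leq\nu_p(d)$, which is exactly what Theorem \ref{ppl} requires for $d\in\mathcal{M}_b(N_1)$.
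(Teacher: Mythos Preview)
Your proof is correct and follows essentially the same approach as the paper's: both establish $k_1\mid k_2$ from $N_1\mid N_2$ (via $|b|_{N_1}\mid|b|_{N_2}$), then use this together with $N_1\mid N_2$ to pass from the gcd $(b^{k_1}-1,N_1)$ to $(b^{k_2}-1,N_2)$ and invoke the characterization of Theorem~\ref{ppl2}. The paper compresses your prime-by-prime verification into the single divisibility $(b^{k_1}-1,N_1)\mid(b^{k_2}-1,N_2)$, but the underlying argument is identical.
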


\begin{proof}
In fact, as $N_{1}\mid N_{2}$, if $\left\vert b\right\vert
_{N_{2}}=k_{2}d$ then $\left\vert b\right\vert _{N_{1}}=k_{1}d$ with
$k_{1}\mid k_{2}$. Thus $\left(  b^{k_{1}}-1,\ N_{1}\right)
\mid\left(  b^{k_{2}}-1,\ N_{2}\right)  $ and the result follows
from Theorem \ref{ppl} and from the fact that $d\in
\mathcal{M}_{b}(N_2)$.
\end{proof}

\begin{theorem}
If $2\in \mathcal{M}_{b}(N)$ and $d$ divides $\left\vert
b\right\vert _{N}$ with $\ d$ even, then $d\in \mathcal{M}_{b}(N)$
and $m_{b}(d,\ N)=\frac{d}{2} $.
\end{theorem}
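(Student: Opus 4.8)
The plan is to treat the two assertions separately. The membership $d\in\mathcal{M}_{b}(N)$ is immediate from what precedes: since $d$ is even we have $2\mid d$, and $d\mid\left\vert b\right\vert_{N}$ by hypothesis, so Corollary \ref{coro} (with $d_{1}=2$ and $d_{2}=d$, using $2\in\mathcal{M}_{b}(N)$) yields $d\in\mathcal{M}_{b}(N)$. The real content of the theorem is therefore the equality $m_{b}(d,\ N)=\tfrac{d}{2}$.

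For that I would first extract from $2\in\mathcal{M}_{b}(N)$ the congruence $b^{\left\vert b\right\vert_{N}/2}\equiv-1\pmod{N}$. Writing $\left\vert b\right\vert_{N}=2k'$, Theorem \ref{ppl} gives $(b^{k'}\bmod N)+(b^{2k'}\bmod N)=m_{b}(2,\ N)\,N$; since $b^{2k'}=b^{\left\vert b\right\vert_{N}}\equiv1\pmod N$ and $b^{k'}\bmod N$ lies in $\{1,\dots,N-1\}$, the left side is at most $N$ and at least $2$, forcing $m_{b}(2,\ N)=1$ and $b^{k'}\bmod N=N-1$, i.e.\ $b^{k'}\equiv-1\pmod N$. (This can also be read off from Theorem \ref{d} together with the factorization $b^{2k'}-1=(b^{k'}-1)(b^{k'}+1)$.)

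Next, write $\left\vert b\right\vert_{N}=dk$, so that $k'=\left\vert b\right\vert_{N}/2=(d/2)k$ and hence $b^{(d/2)k}\equiv-1\pmod N$. Put $r_{i}=b^{ik}\bmod N\in\{1,\dots,N-1\}$ for $i=1,\dots,d$. For $1\le i\le d/2$ we have $b^{(i+d/2)k}=b^{ik}\,b^{(d/2)k}\equiv-b^{ik}\pmod N$, so $r_{i+d/2}=N-r_{i}$, that is, $r_{i}+r_{i+d/2}=N$. As $i$ runs through $1,\dots,d/2$ the pairs $\{i,\,i+d/2\}$ partition $\{1,\dots,d\}$, so by Theorem \ref{ppl},
\[
m_{b}(d,\ N)\,N=\sum_{i=1}^{d}r_{i}=\sum_{i=1}^{d/2}\bigl(r_{i}+r_{i+d/2}\bigr)=\frac{d}{2}\,N,
\]
and cancelling $N$ gives the claim.

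I expect the only delicate point to be the last paragraph's bookkeeping: justifying $r_{i+d/2}=N-r_{i}$ (not merely $\equiv-r_{i}$), which rests on each $r_{i}$ being a nonzero residue strictly below $N$ because $(b,N)=1$, and checking that $i\mapsto i+d/2$ really partitions the index set into $d/2$ disjoint pairs. Everything else is a direct application of Corollary \ref{coro} and Theorem \ref{ppl}.
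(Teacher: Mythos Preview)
Your proof is correct and follows essentially the same idea as the paper's: both arguments show that the elements $b^{ik}\bmod N$ for $1\le i\le d$ fall into $d/2$ pairs summing to $N$, because multiplication by $b^{(d/2)k}=b^{\left\vert b\right\vert_{N}/2}\equiv-1$ sends each residue to its complement. The paper phrases this via the coset decomposition of Theorem~\ref{d_divide} (each coset of $\langle b^{k_{1}}\rangle=\{1,N-1\}$ contributes $N$ to the sum), whereas you carry out the pairing $i\leftrightarrow i+d/2$ explicitly and are more careful to justify $b^{\left\vert b\right\vert_{N}/2}\equiv-1$ from the hypothesis $2\in\mathcal{M}_{b}(N)$; but the substance is the same.
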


\begin{proof}
In Theorem \ref{d_divide}, letting $d_{1}=2$, $k_{1}=\dfrac{e}{2}$,
$d_{2}=d$ and therefore $c=\dfrac{d}{2}$ and $\left\langle
b^{k_{1}}\right\rangle =\left\{ 1,\ N-1\right\}  $ we obtain that
$\left\langle b^{k_{2}}\right\rangle $ is formed by $c$ translations
of $\left\{ 1,\ N-1\right\}  $ and so the sum of its elements is
$cN$, thus we have $m_{b}(d,\ N)=c=\dfrac{d}{2}$.
\end{proof}

The hypothesis $2\in \mathcal{M}_{b}(N)$ is essential, as is shown
in the following example due to Lewittes, see \cite{Lewittes}.
\begin{example}
Let $%
N=7\times19\times9901,$ so $\left\vert 10\right\vert _{N}=36$ and,
in addition, $N$ does not have the Midy's property for the base $10$
and for any $d=2,\ 3,\ 6$; but it has this property when $d=4,\ 9,\
12,\ 18$ and $36$ and $m_{10}(12,\ N)=7$.
\end{example}

Next theorem has a big influence in our work.

\begin{theorem}[Theorem 3.6 in \protect\cite{Nathanson}]
\label{poten}Let $p$ be an odd prime not dividing $b$, $%
m=\nu_{p}(b^{\left\vert b\right\vert _{p}}-1)$ and let $t$ be a
positive integer, then
\begin{equation*}
\left\vert b\right\vert _{p^{t}}=%
\begin{cases}
\left\vert b\right\vert _{p} & \text{ if }t\leq m, \\
&  \\
p^{t-m}\left\vert b\right\vert _{p} & \text{ if \ }t>m.%
\end{cases}
\end{equation*}
\end{theorem}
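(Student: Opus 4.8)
The plan is to separate the two ranges of $t$ at the value $t=m$ and to reduce the whole statement to a single lifting‑the‑exponent computation. Write $e=\left\vert b\right\vert _{p}$ throughout. Two basic divisibilities are used repeatedly: for every $t\ge 1$, reducing modulo $p$ shows $b^{\left\vert b\right\vert _{p^{t}}}\equiv 1\pmod p$, so $e=\left\vert b\right\vert _{p}\mid \left\vert b\right\vert _{p^{t}}$; and reducing modulo $p^{t}$ shows $\left\vert b\right\vert _{p^{t}}\mid \left\vert b\right\vert _{p^{t+1}}$. Also recall $m=\nu_{p}(b^{e}-1)\ge 1$.

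The case $t\le m$ is immediate. By definition $m=\nu _{p}(b^{e}-1)$, so $t\le m$ gives $p^{t}\mid b^{e}-1$, that is $b^{e}\equiv 1\pmod{p^{t}}$, hence $\left\vert b\right\vert _{p^{t}}\mid e$. Together with $e\mid \left\vert b\right\vert _{p^{t}}$ this forces $\left\vert b\right\vert _{p^{t}}=e=\left\vert b\right\vert _{p}$; in particular this gives the value $p^{0}e$ at $t=m$, which will start the induction below.

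The core step is the elementary fact that, for an odd prime $p$, an integer $u$ and $s\ge 1$,
\begin{equation*}
\nu _{p}\bigl((1+p^{s}u)^{p}-1\bigr)=s+1+\nu _{p}(u),
\end{equation*}
proved by expanding $(1+p^{s}u)^{p}-1=p^{s+1}u+\binom{p}{2}p^{2s}u^{2}+\cdots$ and checking that, because $p$ is odd and $s\ge 1$, every term after $p^{s+1}u$ has strictly larger $p$‑adic valuation (for instance $\binom{p}{2}p^{2s}u^{2}=\frac{p-1}{2}\,p^{2s+1}u^{2}$, and $2s+1\ge s+2$). I would then run an induction on $t\ge m$ to handle the range $t>m$. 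The base case $t=m$ is $\left\vert b\right\vert _{p^{m}}=e$ with $\nu _{p}(b^{e}-1)=m$, already in hand. For the inductive step, assume $\left\vert b\right\vert _{p^{t}}=p^{t-m}e$ and $\nu _{p}\bigl(b^{\left\vert b\right\vert _{p^{t}}}-1\bigr)=t$; writing $b^{\left\vert b\right\vert _{p^{t}}}=1+p^{t}u$ with $p\nmid u$ and applying the displayed formula with $s=t$ gives $\nu _{p}\bigl(b^{\,p\left\vert b\right\vert _{p^{t}}}-1\bigr)=t+1$. Hence $\left\vert b\right\vert _{p^{t+1}}\mid p\left\vert b\right\vert _{p^{t}}$; being also a multiple of $\left\vert b\right\vert _{p^{t}}$, it equals either $\left\vert b\right\vert _{p^{t}}$ or $p\left\vert b\right\vert _{p^{t}}$, and the first option is excluded since $p^{t+1}\nmid b^{\left\vert b\right\vert _{p^{t}}}-1$. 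Thus $\left\vert b\right\vert _{p^{t+1}}=p\left\vert b\right\vert _{p^{t}}=p^{\,t+1-m}e$ and $\nu _{p}\bigl(b^{\left\vert b\right\vert _{p^{t+1}}}-1\bigr)=t+1$, completing the induction and giving $\left\vert b\right\vert _{p^{t}}=p^{t-m}\left\vert b\right\vert _{p}$ for all $t>m$.

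The main obstacle is really just the valuation bookkeeping in the displayed formula: one must verify that the whole tail $\sum_{j\ge 2}\binom{p}{j}p^{sj}u^{j}$ has valuation at least $s+2$, and observe that this genuinely breaks for $p=2$ (already $(1+2u)^{2}-1=4u(1+u)$ has valuation at least $3$), which is precisely why the hypothesis excludes $p=2$. Everything else is the routine juggling of the divisibilities $\left\vert b\right\vert _{p}\mid \left\vert b\right\vert _{p^{t}}\mid \left\vert b\right\vert _{p^{t+1}}\mid p\left\vert b\right\vert _{p^{t}}$.
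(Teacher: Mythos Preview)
The paper does not supply its own proof of this statement; it is quoted verbatim as Theorem~3.6 from \cite{Nathanson} and used as a black box. So there is nothing in the paper to compare against.

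Your argument is correct and is the standard lifting-the-exponent proof. The two divisibilities $e\mid\left\vert b\right\vert_{p^t}$ and $\left\vert b\right\vert_{p^t}\mid\left\vert b\right\vert_{p^{t+1}}$ are set up cleanly, the case $t\le m$ is handled directly, and the inductive step for $t\ge m$ carries both pieces of data $\bigl(\left\vert b\right\vert_{p^t}=p^{t-m}e,\ \nu_p(b^{\left\vert b\right\vert_{p^t}}-1)=t\bigr)$ forward correctly. The valuation computation for $(1+p^{s}u)^{p}-1$ is right: the term $j=2$ contributes valuation $2s+1\ge s+2$, the terms $2\le j\le p-1$ contribute at least $1+sj\ge s+2$, and the top term $j=p$ contributes $sp\ge s+2$ since $s(p-1)\ge 2$ for $p\ge 3$, $s\ge 1$. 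Your remark on why $p=2$ fails is also accurate. One cosmetic point: you state the displayed identity with the extra $+\nu_p(u)$ term but then only use it with $p\nmid u$; either drop the $\nu_p(u)$ or note that the general case reduces to $p\nmid u$ by absorbing powers of $p$ into the exponent $s$.
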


For the base $b=10$ the greatest $m$ known is $2,$ which is achieved
with the primes $3,\ 487$ and $56598313$, see \cite{Montgomery}. \
From the same paper we take the following example: \ if $b=68$ and
$p=113$, then $\ \left\vert b\right\vert _{p}=\ \left\vert
b\right\vert _{p^{2}}=\ \left\vert
b\right\vert _{p^{3}}$. Something similar occurs for $b=42$ and $p=23$. For $%
m=3,$ these are the only cases with $p<2^{32}$ and $2\leq b\leq91$.

Next theorem allows us to build $\mathcal{M}_{b}(p^{n})$ from $\mathcal{M%
}_{b}(p)$.

\begin{theorem}
Let $b,\ p,\ n$ be integers where $p$ is a prime not dividing $b$,
and $n$ positive. Let $m=\nu _{p}(b^{\left\vert b\right\vert
_{p}}-1)$, then
\begin{equation*}
\mathcal{M}_{b}(p^{n})=%
\begin{cases}
\mathcal{M}_{b}(p) & \text{if }n\leq m, \\
\bigcup\limits_{i=0}^{n-m}p^{n-m-i}\mathcal{M}_{b}(p) & \text{if }n>m\text{.}%
\end{cases}%
\end{equation*}%
Therefore;
\begin{equation*}
\left\vert \mathcal{M}_{b}(p^{n})\right\vert =%
\begin{cases}
\left\vert \mathcal{M}_{b}(p)\right\vert  & \text{if }n\leq m, \\
(n-m+1)\left\vert \mathcal{M}_{b}(p)\right\vert  & \text{if }n>m\text{.}%
\end{cases}%
\end{equation*}
\end{theorem}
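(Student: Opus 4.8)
The plan is to combine Theorem \ref{ppl2} (the divisibility criterion in terms of primes dividing $(b^k-1,N)$) with the formula for $|b|_{p^t}$ from Theorem \ref{poten}, and then to prove the stated identity by a careful bookkeeping of which divisors $d$ of $|b|_{p^n}$ satisfy the criterion. Throughout, write $e=|b|_p$, so that by Theorem \ref{poten} we have $|b|_{p^n}=e$ when $n\le m$ and $|b|_{p^n}=p^{n-m}e$ when $n>m$. Note first the case $n\le m$ is essentially immediate: then $|b|_{p^n}=e=|b|_p$, and a divisor $d$ of $e$ lies in $\mathcal M_b(p^n)$ iff for every prime $q\mid(b^{e/d}-1,p^n)$ one has $\nu_q(p^n)\le\nu_q(d)$; the only candidate prime is $q=p$, and $p\mid b^{e/d}-1$ iff $e\mid e/d$ iff $d=1$, in which case $\nu_p(p^n)=n\le m=\nu_p(b^e-1)$, but we need $\nu_p(d)=0$, so this forces $d>1$ — hence the condition on $d=1$ is that $p\nmid b^{e/1}-1$, which is false, so actually we must check: when $d=1$, $b^{e}-1$ is divisible by $p$, and we'd need $n\le 0$; thus $1\notin\mathcal M_b(p^n)$ for $n\ge 1$, while every $d>1$ dividing $e$ automatically satisfies the criterion vacuously (since then $p\nmid b^{e/d}-1$). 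This matches $\mathcal M_b(p)$ exactly, giving the first case.

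For the case $n>m$, the strategy is to stratify the divisors of $|b|_{p^n}=p^{n-m}e$ according to the power of $p$ they contain. Write a divisor in the form $d=p^{j}d'$ where $d'\mid e$ (I would first check that $\gcd(e/e',p)$ behaves well — actually the cleanest route is to note $p\mid e$ is possible, so instead stratify by $j=\nu_p(d)\in\{0,1,\dots,n-m+\nu_p(e)\}$ and let $d'=d/p^{j}$, which divides $p^{-\nu_p(e)}e\cdot$(something); I will organize this so that $d'$ ranges over divisors of $e$ with the matching $p$-part removed). For such $d$, set $k=|b|_{p^n}/d$ and apply Theorem \ref{ppl2}: the only relevant prime is $p$, and $p\mid(b^k-1,p^n)$ iff $|b|_p\mid k$, i.e. $e\mid k$. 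Using Theorem \ref{poten} again, $|b|_{p^n}/d$ is a multiple of $e$ precisely when $d\mid p^{n-m}$, i.e. when $d'=1$ and $j\le n-m+$ (the $p$-part of $e$). When $e\nmid k$, the criterion is vacuous and $d\in\mathcal M_b(p^n)$. When $e\mid k$, we need $\nu_p(p^n)=n\le\nu_p(d)=j$. Carrying this through, one finds that $d\in\mathcal M_b(p^n)$ iff either $d$ has a nontrivial factor coprime-to-the-$e$-structure (the vacuous case, contributing $p^{i}\cdot(\text{divisor of }e\text{ in }\mathcal M_b(p))$ for appropriate $i$), or $d=p^{n-m-i}\cdot(\text{an element of }\mathcal M_b(p))$ — which is exactly the claimed union $\bigcup_{i=0}^{n-m}p^{n-m-i}\mathcal M_b(p)$.

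**The main obstacle** will be proving that the union $\bigcup_{i=0}^{n-m}p^{n-m-i}\mathcal M_b(p)$ is in fact a \emph{disjoint} union (up to the overlap forced by $\mathcal M_b(p)$ already containing multiples of $p$), so that the cardinality formula $|\mathcal M_b(p^n)|=(n-m+1)|\mathcal M_b(p)|$ drops out. The point is that multiplication by $p^{n-m-i}$ shifts the $p$-adic valuation, and since Corollary \ref{coro} (or Theorem \ref{ppl}) shows $\mathcal M_b(p)$ is closed upward under divisibility within the divisors of $e$, the sets $p^{s}\mathcal M_b(p)$ for distinct $s$ intersect only where a divisibility coincidence is forced — and a short valuation argument (comparing $\nu_p$ across the two representations $d=p^{s_1}d_1=p^{s_2}d_2$ with $d_1,d_2\in\mathcal M_b(p)$) shows each $d\in\mathcal M_b(p^n)$ has a \emph{unique} representation with $s$ as large as possible, pinning down the count. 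I would isolate this as a small lemma: for $d_1,d_2\in\mathcal M_b(p)$ and $s_1\ne s_2$, $p^{s_1}d_1=p^{s_2}d_2$ is impossible unless the larger-$s$ side has its $\mathcal M_b(p)$-factor's $p$-valuation strictly smaller — which cannot happen once we've verified every element of $\mathcal M_b(p)$ that is a multiple of $p$ already appears with the full power $\nu_p(e)$ as needed. Once disjointness is settled, both displayed equalities follow, and the $n\le m$ case is handled as above; I would present the $n>m$ analysis first in full and then remark that the cardinality statement is the disjointness corollary.
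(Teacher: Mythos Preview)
Your overall strategy---apply Theorem~\ref{ppl2} to each divisor of $|b|_{p^n}$ and use Theorem~\ref{poten} to compute that order---is exactly the paper's approach, and your treatment of the case $n\le m$ is fine. The difficulty you perceive in the case $n>m$, however, is self-inflicted: you write that ``$p\mid e$ is possible'' and then spend the entire ``main obstacle'' paragraph wrestling with overlaps among the sets $p^{s}\mathcal M_b(p)$ that could only arise if some element of $\mathcal M_b(p)$ were divisible by $p$. But $e=|b|_p$ divides $p-1$ by Fermat's little theorem, so $p\nmid e$ and every element of $\mathcal M_b(p)$ (being a divisor of $e$) is coprime to $p$. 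Once you note this, the stratification becomes clean: every divisor of $|b|_{p^n}=p^{n-m}e$ is uniquely of the form $p^{j}d'$ with $0\le j\le n-m$ and $d'\mid e$, and the sets $p^{j}\mathcal M_b(p)$ are pairwise disjoint because their elements have $p$-adic valuation exactly $j$. The cardinality formula is then immediate, with no lemma needed.

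With this correction your argument goes through. For $d=p^{j}d'$ one has $k=|b|_{p^n}/d=p^{n-m-j}(e/d')$; since $\gcd(e,p)=1$, $e\mid k$ iff $d'=1$. If $d'>1$ the criterion of Theorem~\ref{ppl2} is vacuous and $d\in\mathcal M_b(p^n)$; if $d'=1$ then $p\mid b^{k}-1$ and one would need $n\le\nu_p(d)=j\le n-m$, impossible. Hence $\mathcal M_b(p^n)=\{p^{j}d':0\le j\le n-m,\ d'\in\mathcal M_b(p)\}$, which is the claimed (disjoint) union. The paper argues the inclusion $p^{n-m-i}\mathcal M_b(p)\subset\mathcal M_b(p^n)$ via the congruence $b^{kp^i}\equiv b^{k}\pmod p$ and leaves the reverse inclusion and the disjointness as routine; your version is more explicit once the needless worry about $p\mid e$ is removed.
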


\begin{proof}
Let $\left\vert b\right\vert _{p}=kd$ and $d\in \mathcal{M}_{b}(p)$ then $(b^{k}%
-1,\ p)=1$. Suppose that $n\leq m$, as $(b^{k}-1,\ p^{n})=1$ and
$\left\vert b\right\vert _{p^{n}}=\left\vert b\right\vert _{p}=kd$
follows that $d\in \mathcal{M}_{b}(p^n)$ and thus
$\mathcal{M}_{b}(p)\subset\mathcal{M}_{b}(p^{n})$. It is also easy
to prove that $\mathcal{M}_{b}(p^{n})\subset\mathcal{M}_{b}(p)$.

We now consider the case when $n>m$. Let $d\in\mathcal{M}_{b}(p)$
and $\left\vert b\right\vert _{p}=kd$, and let $i$ be an integer
with $0\leq i\leq n-m$, by Theorem \ref{poten} we have that
$\left\vert b\right\vert _{p^{n}} =p^{n-m}\left\vert b\right\vert
_{p}=kp^{i}(p^{n-m-i}d)$. We affirm that $(b^{kp^{i} }-1,\ p^{n})=1$
because $b^{kp^{i}}\equiv(b^{k})^{p^{i}}\equiv b^{k} \
\operatorname{mod}\ p\not \equiv 1\ \operatorname{mod}\ p$. As
$(b^{kp^{i} }-1,\ p^{n})=1$ and $\left\vert b\right\vert
_{p^{n}}=kp^{i}(p^{n-m-i}d)$ it follows from Theorem \ref{ppl2} that
$p^{n-m-i}d\in \mathcal{M}_{b}(p^n)$. \ In this way we have proved
that $p^{n-m-i}\mathcal{M}_{b}(p)\subset\mathcal{M}_{b}(p^{n})$.

Similarly, we can show that $\mathcal{M}_{b}(p^{n})\subset
p^{n-m-i}\mathcal{M}_{b}(p)$. The second part of the theorem is a
direct consequence from the first part.
\end{proof}

Theorem \ref{ppl2} says that if $p$ is prime and $d>1$ is a divisor of $%
\left\vert b\right\vert _{p}$, then $d\in \mathcal{M}_{b}(p)$ and therefore $%
\left\vert \mathcal{M}_{b}(p)\right\vert =\tau(o_{p}(b))-1$, where
$\tau(n)$ denote the number of positive divisors of $n$.

\begin{theorem}
\label{john}Let $N$, $M$ be integers such that $\left\vert
b\right\vert _{MN}=\left\vert b\right\vert _{N}$, then
\end{theorem}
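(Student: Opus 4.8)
The natural target here is an explicit description of $\mathcal{M}_{b}(MN)$ in terms of $\mathcal{M}_{b}(N)$, and my plan is to bypass the group-theoretic machinery of Theorems~\ref{ppl} and~\ref{ppl2} and argue directly from the divisibility criterion of Theorem~\ref{d}. Put $e=|b|_{N}$; the hypothesis says $e=|b|_{MN}$ as well, so the two identities $N\,D_{b}(N)=b^{e}-1$ and $MN\,D_{b}(MN)=b^{e}-1$ hold at the same time. Dividing one by the other gives the single fact everything rests on,
\[
D_{b}(MN)=\frac{D_{b}(N)}{M},
\]
and in particular $M\mid D_{b}(N)$ (a restatement of the hypothesis $MN\mid b^{e}-1$). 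Also, a positive integer $d$ is an admissible block number for $MN$ exactly when $d\mid e$, i.e. exactly when it is admissible for $N$, so both Midy's sets sit inside the same set of divisors of $e$.

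Next I would fix $d\geq 2$ with $d\mid e$, set $k=e/d$, and unwind the two memberships. By Theorem~\ref{d}, $d\in\mathcal{M}_{b}(MN)$ iff $b^{k}-1\mid D_{b}(MN)=D_{b}(N)/M$, i.e. iff $M(b^{k}-1)\mid D_{b}(N)$; while $d\in\mathcal{M}_{b}(N)$ iff $b^{k}-1\mid D_{b}(N)$. Hence membership in $\mathcal{M}_{b}(MN)$ forces membership in $\mathcal{M}_{b}(N)$, and conversely, once $d\in\mathcal{M}_{b}(N)$ we may write $D_{b}(N)=(b^{k}-1)t$ for the integer $t$ furnished by Theorem~\ref{d} (so that $b^{e}-1=(b^{k}-1)Nt$), and then $M(b^{k}-1)\mid D_{b}(N)$ reduces to $M\mid t$. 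This yields
\[
\mathcal{M}_{b}(MN)=\Bigl\{\,d\in\mathcal{M}_{b}(N):\ M\ \Big|\ \frac{b^{|b|_{N}}-1}{N\,(b^{\,|b|_{N}/d}-1)}\,\Bigr\}.
\]
Since $k\mid e$ one may cancel $b^{k}-1$ from $b^{e}-1$, so an equivalent formulation is: $d\in\mathcal{M}_{b}(MN)$ iff $MN\mid (b^{e}-1)/(b^{k}-1)=\sum_{j=0}^{d-1}b^{jk}$. In this shape the inclusion $\mathcal{M}_{b}(MN)\subseteq\mathcal{M}_{b}(N)$ is immediate, and it recovers the Proposition above as the special case $N_{1}=N$, $N_{2}=MN$.

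I do not expect a genuinely hard step: the whole argument is a short chain of elementary divisibility equivalences. The only care needed is bookkeeping — invoking $k\mid e$ before cancelling $b^{k}-1$ from $b^{e}-1$, using that $D_{b}(N)/M$ is an integer (exactly the hypothesis) before applying Theorem~\ref{d} to the modulus $MN$, and keeping the integer $t$ fixed throughout so that "$M\mid t$'' is unambiguous. If the statement also records the multiplier $m_{b}(d,MN)$, the extra ingredient is the identity $\sum_{j=0}^{d-1}b^{jk}=N\,t$ together with a comparison of $\sum_{i=1}^{d}(b^{ik}\bmod MN)$ and $\sum_{i=1}^{d}(b^{ik}\bmod N)$, tracking how many extra multiples of $N$ each reduced residue acquires modulo $MN$; that is the only place a slightly less mechanical computation is needed.
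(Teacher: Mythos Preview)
Your argument via Theorem~\ref{d} is clean and correct: from $|b|_{MN}=|b|_N=e$ you get $D_b(MN)=D_b(N)/M$, and then $d\in\mathcal{M}_b(MN)\iff M(b^k-1)\mid D_b(N)$, which immediately gives the inclusion $\mathcal{M}_b(MN)\subseteq\mathcal{M}_b(N)$ (part~1 of the theorem). The paper proves this same inclusion by the contrapositive using Theorem~\ref{ppl2} instead: if some prime $q\mid(b^k-1,N)$ has $\nu_q(N)>\nu_q(d)$, then a fortiori $\nu_q(MN)>\nu_q(d)$, so $d\notin\mathcal{M}_b(MN)$.

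Where the routes genuinely diverge is in parts~2 and~3, which you did not see: under the extra hypothesis $(M,N)=1$ the paper states the membership criterion in the explicit prime form ``for every prime $r\mid(b^k-1,M)$ one has $\nu_r(M)\le\nu_r(d)$'', and for part~3 specializes to $M=p^{s+1}$ with $p\nmid N$. These follow in one line from Theorem~\ref{ppl2}, since coprimality splits the primes of $(b^k-1,MN)$ into those of $(b^k-1,N)$ (handled by $d\in\mathcal{M}_b(N)$) and those of $(b^k-1,M)$. Your criterion ``$M\mid t$'' with $t=(b^e-1)/\bigl(N(b^k-1)\bigr)$ is equivalent, but making that equivalence explicit requires computing $\nu_r(t)=\nu_r(b^{kd}-1)-\nu_r(b^k-1)$ for $r\mid(b^k-1,M)$, i.e.\ a lifting-the-exponent step (with the usual $r=2$ caveat). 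So your approach buys a hypothesis-free description of $\mathcal{M}_b(MN)$ and a very short proof of part~1; the paper's approach via the valuation criterion buys the ready-to-use prime-by-prime conditions of parts~2 and~3 without any LTE detour. The speculative paragraph about $m_b(d,MN)$ is not needed---the theorem records no multiplier.
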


\begin{enumerate}
\item $\mathcal{M}_{b}(MN)\subseteq\mathcal{M}_{b}(N)$. \

\item If $N$ and $M$ are relatively primes, then
\begin{equation*}
\mathcal{M}_{b}(MN)=\left\{
\begin{array}{c}
d\in\mathcal{M}_{b}(N):\left\vert b\right\vert _{N}=kd\text{ \ and \
\ }
\\
\forall\left( r\text{ primo}\right) \left( r\mid\left( b^{k}-1\text{, }%
M\right) \Rightarrow\nu_{r}\left( M\right) \leq\nu_{r}\left( d\right) \right)%
\end{array}
\right\} .
\end{equation*}

\item In particular, if $p$ is a prime not dividing $N$, $\left\vert
b\right\vert _{p}$ is a divisor of $\left\vert b\right\vert _{N}$, and $%
s=\nu_{p}(\left\vert b\right\vert _{N})$, then
\begin{equation*}
\mathcal{M}_{b}(p^{s+1}N)=\left\{ d\in\mathcal{M}_{b}(N):\left\vert
b\right\vert _{N}=kd\text{ \ and \ }\left( b^{k}-1\text{, }p\right)
=1\right\} .
\end{equation*}
\end{enumerate}

\begin{proof}
To prove the first part we show that if $d\notin\mathcal{M}_{b}(N)$,
then $d\notin\mathcal{M}_{b}(MN)$. In fact, as $\left\vert
b\right\vert _{N}=\left\vert b\right\vert _{MN}=kd$ and
$d\notin\mathcal{M}_{b}(N)$ from Theorem \ref{ppl2}, there exists a
prime $q,$ divisor of $\left( b^{k}-1,\ N\right)  $ such that
$\nu_{q}\left(  N\right)  >\nu_{q}\left( d\right)  .$ As $\left(
b^{k}-1,\ N\right)$  is a divisor of   $\left(  b^{k}-1,\ MN\right)
$ and $\nu_{q}\left(  MN\right)  \geq\nu_{q}\left(  N\right)  $
Theorem \ref{ppl2} guarantees that $d\notin\mathcal{M}_{b}(MN)$.

We now add the hypothesis $\left(  M,N\right)  =1$ and let
$\left\vert
b\right\vert _{N}=\left\vert b\right\vert _{MN}=kd$ with $d\in\mathcal{M}%
_{b}(N)$. Consider a prime $r$ divisor of $\left(  b^{k}-1,MN\right)
$. Since $M$ and $N$ are relatively primes then either $r\mid\left(
b^{k}-1,M\right) $ or $r\mid\left(  b^{k}-1,N\right)  $, but not
both. If $r\mid\left( b^{k}-1,N\right)  ,$ as
$d\in\mathcal{M}_{b}(N)$ from Theorem \ref{ppl2} follows that
$\nu_{r}\left(  N\right)  \leq\nu_{r}\left(  d\right)  $ and as $M$
and $N$ are relatively primes we have that $\nu_{r}\left(  N\right)
=\nu_{r}\left(  MN\right)  $ and therefore
$d\in\mathcal{M}_{b}(MN)$. If $r\mid\left(  b^{k}-1,M\right)  $, as
$r\nmid N$, we have $\nu_{r}\left( MN\right)  =\nu_{r}\left(
M\right) $ and from the assumption and Theorem \ref{ppl2} we get
that $d\in\mathcal{M}_{b}(MN)$.

The third part now is clear, because $\left\vert b\right\vert
_{p^{s+1}}$ is a divisor of $\left\vert b\right\vert _{N}$ and $p$
and $N$ are relatively primes.
\end{proof}

\begin{theorem}
Let $N,$ $p$ be integers with $(N,b)=1$ with $p$ a prime divisor of
$b-1$. Then there exists a positive integer $s$ such that for all
integer $t,$ with $t>s$, we have that
$\mathcal{M}_{b}(p^{t}N)=\varnothing $.
\end{theorem}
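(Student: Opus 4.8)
The plan is to exploit Theorem \ref{ppl2}, which says that $d \notin \mathcal{M}_b(M)$ precisely when there is a prime $q \mid (b^k-1, M)$ with $\nu_q(M) > \nu_q(d)$, where $|b|_M = kd$. The idea is to show that once the exponent $t$ of $p$ is large enough, the prime $p$ itself will always serve as such an obstructing prime $q$ for $M = p^t N$, no matter which divisor $d$ of $|b|_{p^t N}$ we pick. The key point is that $p \mid b-1$ forces $|b|_p = 1$, so $p \mid b^k - 1$ for \emph{every} $k \geq 1$; thus $p \mid (b^k-1, p^t N)$ automatically, and the only thing that can save $d$ is the size inequality $\nu_p(p^t N) \leq \nu_p(d)$.

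First I would fix notation: write $e_t = |b|_{p^t N}$ and note that $d$ ranges over divisors of $e_t$ with $d \geq 2$. Since $d \mid e_t$, certainly $\nu_p(d) \leq \nu_p(e_t)$. Next I would bound $\nu_p(e_t)$ from above independently of $t$, or at least control its growth. Using Theorem \ref{poten} (applied to the prime power $p^{t'}$ dividing $p^t N$, with $t' = t + \nu_p(N)$) together with the multiplicativity $|b|_{p^t N} = \mathrm{lcm}(|b|_{p^t}, |b|_{N/p^{\nu_p(N)}}\text{-part})$, one sees $\nu_p(e_t)$ grows like $t - m + O(1)$ where $m = \nu_p(b^{|b|_p}-1) = \nu_p(b-1)$ (here $|b|_p=1$). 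Meanwhile $\nu_p(p^t N) = t + \nu_p(N)$. The crucial comparison is therefore: $\nu_p(d) \leq \nu_p(e_t) = t - \nu_p(b-1) + \nu_p(\text{other contributions})$, which is strictly less than $t + \nu_p(N) = \nu_p(p^t N)$ once $t$ exceeds some threshold $s$ depending only on $b, N, p$. Hence for all such $t$ and all $d \mid e_t$ with $d \geq 2$, we have $\nu_p(p^t N) > \nu_p(d)$ with $p \mid (b^k-1, p^t N)$, so by Theorem \ref{ppl2}, $d \notin \mathcal{M}_b(p^t N)$; that is, $\mathcal{M}_b(p^t N) = \varnothing$.

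The main obstacle I anticipate is getting a clean uniform bound on $\nu_p(|b|_{p^t N})$ as $t \to \infty$ — specifically, confirming that it is $t + (\text{constant})$ with a constant strictly smaller than $t + \nu_p(N)$ would give, i.e. that the deficit $\nu_p(b-1) \geq 1$ is never eaten up by cross-terms. This requires care because $|b|_{p^t N}$ is a least common multiple of the orders modulo the various prime-power factors, and in principle the $p$-adic valuation of $|b|_q$ for other primes $q \mid N$ could also be large. However those contributions are \emph{fixed} (they do not depend on $t$), so they only affect the constant $s$, not the asymptotic slope; the slope-$1$ growth of $\nu_p(|b|_{p^t})$ coming from Theorem \ref{poten} dominates, and the comparison $\nu_p(|b|_{p^t N}) = t - m + c < t + \nu_p(N)$ holds for all $t > s := m + c - \nu_p(N) = \nu_p(b-1) + c - \nu_p(N)$, where $c$ absorbs the bounded cross-terms. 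A secondary point to handle carefully: one must make sure every divisor $d \geq 2$ of $e_t$ is covered, including $d = e_t$ itself, but since the bound $\nu_p(d) \leq \nu_p(e_t)$ is used in its strongest form $d = e_t$, this is automatic.
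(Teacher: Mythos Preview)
Your argument is correct and rests on exactly the same idea as the paper's: since $p\mid b-1$ one has $p\mid b^{k}-1$ for every $k$, so $p$ is always an obstructing prime in Theorem~\ref{ppl2}, and any $d\in\mathcal{M}_b(p^tN)$ would force $\nu_p(p^tN)\le\nu_p(d)\le\nu_p(|b|_{p^tN})$, which fails for large $t$. The paper packages this more compactly: it first reduces without loss of generality to the case $p\nmid N$, sets $s=\nu_p(|b|_N)$, and then simply invokes part~3 of Theorem~\ref{john}, observing that the condition $(b^k-1,p)=1$ appearing there is never satisfied.

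Your asymptotic discussion and the worry about ``cross-terms'' are unnecessary and slightly misleading. The $p$-adic valuation of an lcm is the \emph{maximum} of the valuations, not a sum; writing $N=p^aN'$ with $p\nmid N'$ gives
\[
\nu_p\bigl(|b|_{p^tN}\bigr)=\max\bigl(\nu_p(|b|_{p^{\,t+a}}),\,\nu_p(|b|_{N'})\bigr),
\]
and since $|b|_{p^{\,t+a}}\mid\varphi(p^{\,t+a})=p^{\,t+a-1}(p-1)$ one has $\nu_p(|b|_{p^{\,t+a}})\le t+a-1$. Hence $\nu_p(|b|_{p^tN})<t+a=\nu_p(p^tN)$ as soon as $t+a>\nu_p(|b|_{N'})$, and the result follows immediately with this explicit threshold. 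This also avoids your appeal to Theorem~\ref{poten}, which as stated in the paper requires $p$ odd, something your hypotheses do not guarantee.
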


\begin{proof}
Without loss of generality we can suppose that $p$ is not a divisor
of $N$. \ Let $s=\nu _{p}(\left\vert b\right\vert _{N})$,  as
$\left\vert
b\right\vert _{p}=1$ we are in the conditions of the third part of Theorem %
\ref{john} and the result is immediately because $\left( b^{k}-1\text{, }%
p\right) =p$ for any $k$.
\end{proof}

The result of previous theorem is true for any divisor $n$, not
necessarily a prime, of $b-1$. Also note that the value of the
integer $s-\nu_p(N)$ is the smallest that satisfies the theorem because $%
\mathcal{M}_{b}(p^{s-\nu_p(N)}N)$ is non empty by the  second part of Theorem \ref%
{john}.

We now study the following question. Given $N$ and $b$ with
$\mathcal{M}_b(N)\neq \varnothing$, is it possible to find a
positive integer $z$ such that $\mathcal{M}_b(zN)=\{|b|_N\}$ ? The
next result, from \cite{motoseI}, will be useful in the sequel.

\begin{lemma}[Corollary 2 in \cite{motoseI}]\label{motose} Let $b\geq 2$ and
$n\geq 2$. Then there exists a prime $p$ with $n=|b|_p$ in all
except the following pairs: $(n,b)=(2,b^{\gamma}-1)$ or $(6,2)$.
\end{lemma}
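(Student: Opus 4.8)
The plan is to recognize this statement as nothing more than a reformulation of Zsygmondy's theorem (the base-$b$ generalization of Bang's theorem). The first step is to set up the elementary dictionary between orders and primitive prime divisors. For a prime $p\nmid b$, the order $|b|_p$ equals $n$ if and only if $p$ is a \emph{primitive prime divisor} of $b^n-1$, i.e.\ $p\mid b^n-1$ but $p\nmid b^j-1$ for every $1\le j<n$. Indeed, if $|b|_p=n$ then $p$ divides $b^n-1$ and none of the earlier $b^j-1$; conversely, if $p$ is a primitive prime divisor then $|b|_p$ divides $n$ but divides no $j<n$, forcing $|b|_p=n$. (A prime dividing $b$ never divides $b^n-1$, so it is irrelevant.) Hence ``there exists a prime $p$ with $n=|b|_p$'' is literally the assertion that $b^n-1$ possesses a primitive prime divisor.

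The second step is to invoke Zsygmondy's theorem: for $b\ge 2$ and $n\ge 2$, the number $b^n-1$ has a primitive prime divisor, with the sole exceptions $(n,b)=(6,2)$ and $n=2$ with $b+1$ a power of $2$ (the latter being the family written $(2,b^{\gamma}-1)$ in the statement). The proof of Zsygmondy itself proceeds through the cyclotomic factorization $b^n-1=\prod_{d\mid n}\Phi_d(b)$ together with a size estimate showing that $\Phi_n(b)$ is strictly larger than the product of the primes it could share with smaller $b^j-1$ (each such prime being at most $n$ and occurring to a controlled power). This is the technical heart of the matter, and it is exactly the step I would \emph{not} reprove but cite, so the main obstacle to a fully self-contained argument is external to the short reasoning sketched here.

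The final step is to confirm that the two listed families genuinely are exceptions, which is immediate. For $(6,2)$ we have $2^6-1=63=3^2\cdot 7$, and $|2|_3=2$, $|2|_7=3$, so no prime has order $6$. For $n=2$ with $b+1=2^{\gamma}$, every odd prime factor of $b^2-1=(b-1)(b+1)$ must divide $b-1$, hence has order $1$, and $2$ also divides $b-1$; so no prime has order $2$. (Conversely, if $b+1$ is not a power of $2$ it has an odd prime factor $p$, and since $\gcd(b-1,b+1)\le 2$ this $p$ does not divide $b-1$, giving $|b|_p=2$ directly; this checks that the $n=2$ exceptions are \emph{exactly} the stated family.) Combining the dictionary of the first step with Zsygmondy's theorem and this verification completes the proof.
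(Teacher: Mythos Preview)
Your argument is correct: the equivalence between ``$|b|_p=n$'' and ``$p$ is a primitive prime divisor of $b^n-1$'' is exactly right, and once that dictionary is in place the statement is Zsygmondy's theorem verbatim, with the exceptional family $n=2$, $b+1=2^{\gamma}$ matching the paper's notation $(n,b)=(2,2^{\gamma}-1)$. Your verification of the exceptions, including the converse for $n=2$, is clean.

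As for comparison with the paper: there is nothing to compare. The paper does not prove this lemma at all; it is quoted as Corollary~2 from the external reference~\cite{motoseI} and used as a black box in the proof of Lemma~\ref{conditions}. Your reduction to Zsygmondy is the standard way such a corollary would be obtained, and is almost certainly what lies behind the cited result as well.
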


To answer the question we will need the following result.

\begin{lemma}\label{conditions}
Let $N$ and $b$ be integers such that $\mathcal{M}_b(N)\neq
\varnothing$. Let $q$ a prime divisor of $|b|_N$. Then there exists
a positive integer $z$ that satisfies the following properties
\begin{enumerate}
\item $|b|_{zN}=|b|_N$,
\item $\mathcal{M}_b(zN)\neq \varnothing$,
\item If $d\in \mathcal{M}_b(zN)$, then $\nu_q(d)=\nu_q(|b|_N)$.
\end{enumerate}
\end{lemma}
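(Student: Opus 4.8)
The plan is to produce the multiplier $z$ as a single prime power, chosen so that multiplying $N$ by it neither changes the order $|b|_N$ nor destroys the Midy property, but forces any Midy divisor $d$ to carry the full $q$-part of $|b|_N$. First I would write $e=|b|_N$ and $s=\nu_q(e)$, and I would look for a prime $p$ with $|b|_p$ equal to the $q$-free part $e/q^{s}$ times $q$, i.e.\ $|b|_p = q^{\alpha}\cdot(\text{something})$ with $\nu_q(|b|_p) \ge 1$ but more importantly with $|b|_p \mid e$; the cleanest choice is a prime $p$ with $|b|_p$ a divisor of $e$ that is itself divisible by $q$. Lemma \ref{motose} is exactly the tool that guarantees such a prime exists, since for $n = $ an appropriate divisor of $e$ divisible by $q$ we get a prime $p$ with $|b|_p = n$, and we only have to dodge the two exceptional pairs $(2, b^{\gamma}-1)$ and $(6,2)$, which I would handle by choosing a different admissible value of $n$ (there is freedom because any multiple of $q$ dividing $e$ works, and if $q=2$ and we are forced toward $n=2$ we can instead use $n=2q'$ for another prime $q' \mid e$, or argue separately in the genuinely small cases).

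Having fixed such a $p$ (note $p \nmid N$ may be assumed after absorbing any common factor, exactly as in the proof of the previous theorem), I would set $z = p^{\,\nu_p(e)+1}$ — more precisely $z=p^{t}$ with $t$ large enough that $|b|_{p^{t}}$ still divides $e$; by Theorem \ref{poten} the order $|b|_{p^t}$ grows by factors of $p$ once $t$ exceeds $m=\nu_p(b^{|b|_p}-1)$, so I must keep $t \le m + \nu_p(e) - \nu_p(|b|_p)$, and I would simply take the largest such $t$, which is $\ge 1$. Then part (1), $|b|_{zN}=|b|_N$, follows because $|b|_{zN}=\operatorname{lcm}(|b|_{p^t},|b|_N)=|b|_N$ by construction. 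Part (2), $\mathcal{M}_b(zN)\neq\varnothing$, follows from the second part of Theorem \ref{john}: $e\in\mathcal{M}_b(N)$ always (it is $|b|_N$ itself, with $k=1$, and $S_d$ trivially works), and the side condition in Theorem \ref{john}(2) for $M=z=p^t$ is automatically met by $d=e$ since $\nu_r(d)$ is maximal among divisors of $e$; so $e\in\mathcal{M}_b(zN)$.

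For part (3) I would invoke Theorem \ref{ppl2} (or Theorem \ref{john}(2)) directly: take $d\in\mathcal{M}_b(zN)$ and write $e=kd$. Suppose for contradiction that $\nu_q(d)<s=\nu_q(e)$, so $q \mid k$. Since $|b|_p$ was chosen to divide $e$ and to be divisible by $q$, and $q\mid k$ while $k \mid e$, I would arrange (this is the delicate bookkeeping) that $|b|_p \mid k$; the point is to pick $|b|_p$ to be a divisor of $e$ whose $q$-part is small enough — ideally $|b|_p$ dividing the "expected'' $k$ — so that $|b|_p \mid k$ whenever $\nu_q(d)<s$. Then $p \mid (b^{k}-1)$, hence $p \mid (b^k-1, zN)$, and by the structure of $z=p^t$ we have $\nu_p(zN) = t + \nu_p(N) \ge t \ge \nu_p(|b|_p)\cdot(\text{bounded stuff})$; the construction of $t$ as the maximal exponent with $|b|_{p^t}\mid e$ makes $\nu_p(zN) > \nu_p(e) \ge \nu_p(d)$, so Theorem \ref{ppl2} is violated, contradicting $d\in\mathcal{M}_b(zN)$. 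Hence $\nu_q(d)=s$.

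The main obstacle I anticipate is exactly the coordination in the last paragraph: choosing $|b|_p$ (via Lemma \ref{motose}) and the exponent $t$ (via Theorem \ref{poten}) simultaneously so that (a) $|b|_{p^t}\mid e$, keeping part (1) true, (b) $t$ is large enough that $\nu_p(zN)$ exceeds $\nu_p(d)$ for every Midy $d$, and (c) $|b|_p \mid k$ forces the $p$-obstruction precisely when $\nu_q(d)<s$ and not otherwise. Threading these three constraints, together with side-stepping the exceptional pairs of Lemma \ref{motose} (and the possibility that no prime $p$ with the ideal order exists, forcing a fallback choice of $n$), is the real content; everything else is a direct appeal to Theorems \ref{ppl}, \ref{ppl2}, \ref{poten}, and \ref{john}.
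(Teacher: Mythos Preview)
Your overall strategy coincides with the paper's: use Lemma~\ref{motose} to produce an auxiliary prime $p$, set $z$ equal to a suitable power of $p$, and then read off properties (1)--(3) from Theorems~\ref{poten}, \ref{ppl2}, and~\ref{john}. However, the plan has a real gap precisely where you flag the ``delicate bookkeeping.''

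The coordination problem you describe dissolves once you commit to the choice $|b|_p = q$ itself (not merely some divisor of $e$ that is divisible by $q$). Indeed, your own constraint---that $|b|_p \mid k$ must follow from $\nu_q(d)<s$---forces this: the smallest possible $k=e/d$ with $\nu_q(d)<s$ is $k=q$, so you need $|b|_p \mid q$, hence $|b|_p = q$. With that choice the paper simply takes $z=p^{\,s+1}$ where $s=\nu_p(|b|_N)$ (when $p\nmid N$) and invokes Theorem~\ref{john}(3) directly; your maximal-$t$ construction also works but is unnecessary. Without fixing $|b|_p=q$, step~(3) does not go through: for instance if $|b|_p = qr$ with $r$ another prime factor of $e$, then $\nu_q(d)<s$ gives $q\mid k$ but says nothing about $r\mid k$, so you cannot conclude $p\mid b^k-1$.

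Two further points where the sketch is too optimistic. First, ``$p\nmid N$ may be assumed after absorbing any common factor'' is not how the paper proceeds and is not obviously harmless; the paper treats $p\mid N$ by a genuine case analysis (comparing $c=\nu_p(N)$ with $s+1$, and then splitting again on whether $q\mid |b|_{N/p^c}$), and in some subcases the correct $z$ is $1$ or $p^{\,s-c+1}$ rather than $p^{\,s+1}$. Second, the exceptional pair $(n,b)=(2,2^\gamma-1)$ in Lemma~\ref{motose} cannot always be dodged by passing to $n=2q'$: if $|b|_N$ is a power of $2$ there is no other prime $q'$ available. The paper handles $q=2$, $b=2^\gamma-1$ by an entirely different, direct argument exploiting $2\mid b-1$, and you would need to supply something equivalent. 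Finally, your justification that $e\in\mathcal{M}_b(N)$ ``trivially'' is incorrect (take $N$ with a prime factor dividing $b-1$); the correct reason is Corollary~\ref{coro} together with the hypothesis $\mathcal{M}_b(N)\neq\varnothing$.
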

\begin{proof}
We will study two cases

\begin{enumerate}
\item Assume that either $q\neq 2$ or $b+1$ is not a power of $2$.

From Lemma \ref{motose} there exists an odd prime $p$ such that
$\left\vert b\right\vert _{p}=q$.

In the sequel, we denote with $c=\nu_{p}\left( N\right)$,
$s=\nu_{p}\left( \left\vert b\right\vert _{N}\right)$ and
$m=\nu_{p}\left( b^{q}-1\right) $.

If $p$ is not a divisor of $N$, from the third part of Theorem
\ref{john}, we have that when $d\in \mathcal{M}_b(zN)$, then
$|b|_N=kd$ and $(b^k-1,p)=1$. Hence if $d\in \mathcal{M}_b(zN)$,
then $\nu_q(d)=\nu_q(|b|_N)$. Thus, in this case, we take
$z=p^{s+1}$. Since $(b-1,zN)=(b-1,N)$ and $|b|_N\in
\mathcal{M}_b(N)$ we have that $|b|_N\in\mathcal{M}_b(zN)$.

From now we suppose that $p$ is a divisor of $N$. Thus $c>0$ and
$N=p^{c}M$ with $M$ non divisible by $p$. We consider the following
cases:

\begin{enumerate}
\item $c\geq s+1$.

Let $d\in\mathcal{M}_{b}(N)$ where $\left\vert b\right\vert
_{N}=kd$, if $p$ divides $b^{k}-1,$ then from Theorem \ref{ppl2} it
follows that $c=\nu_{p}\left(  N\right) \leq\nu_{p}\left( d\right)
\leq s$, which is a contradiction. In consequence, we get that $d\in
\mathcal{M}_b(N)$, implies that $|b|_N=kd$ and
$\nu_q(d)=\nu_q(|b|_N)$ and we take $z=1$.

\item $c<s+1$.

We  consider two subcases, depending if either $q$ is or not a
divisor of $\left\vert b\right\vert _{M}$.

Firstly, we assume that $q\mid\left\vert b\right\vert _{M}$. Since
$\left\vert b\right\vert _{N}=\left[  \left\vert b\right\vert
_{p^{c}}, \left\vert b\right\vert _{M}\right] $ and $\left\vert
b\right\vert _{p^{s+1}M}=\left[  \left\vert b\right\vert _{p^{s+1}},
\left\vert b\right\vert _{M}\right] $ from Theorem \ref{poten},
$|b|_{N}=\left[qp^{\delta},|b|_M\right]$ and
$|b|_{p^{s+1}M}=\left[qp^{\varepsilon},\left|b\right|_M\right]$;
where $\delta=\max(0,c-m)$ and $\varepsilon=\max(0,s-m+1)$.

We claim that $\left\vert b\right\vert _{p^{s+1}M}=\left\vert
b\right\vert _{N}=|b|_M$. In fact, since $|b|_N=\left[qp^{\delta},
|b|_M\right]$, $s=\nu_{p}(|b|_N)$ and $\delta<s$, we obtain that
$\nu_{p}(|b|_M)=s$ and hence $|b|_N=|b|_M$. Also as $\varepsilon\leq
s$, we get that $|b|_{p^{s+1}M}=|b|_M$.

By the third part of Theorem \ref{john} we have that $d\in
\mathcal{M}_{b}(p^{s+1}M)$, implies that $\nu_q(d)=\nu_q(|b|_N)$. So
we take $z=p^{s-c+1}$. Again,  as \linebreak $(b-1,zN)=(b-1,N)$ and
$|b|_N\in \mathcal{M}_b(N)$, then $|b|_N\in\mathcal{M}_b(zN)$.

 Assume that $q\nmid\left\vert b\right\vert _{M}$. Similar as in the above paragraph we
 can show that $|b|_{p^{s+1}M}=|b|_N=q|b|_M$.
 We affirm that
$$\mathcal{M}_b(p^{s+1}M)=\{d'q: d'\in \mathcal{M}_b(M)\}.$$

 Let $d'\in \mathcal{M}_b(M)$ since $|b|_{p^{s+1}M}=k(d'q)$ and $(b^k-1,M)=
\linebreak(b^k-1,p^{s+1}M)$, from Theorem \ref{ppl2}, we get that
 $d'q\in\mathcal{M}_b(p^{s+1}M)$. Therefore, $\{d'q:
 d'\in\mathcal{M}_b(M)\}\subseteq \mathcal{M}_b(p^{s+1}M)$.

 Let
 $d\in \mathcal{M}_b(p^{s+1}M)$. Since $|b|_{p^{s+1}M}=q|b|_M$
 we have that $d$ is either a divisor of $|b|_{M}$ or $d=q$ or $d=d'q$ where $d'>1$ is a divisor of $|b|_M$.
 If $d$ is a divisor of $|b|_M$ with $|b|_M=kd$, then as $p$
divides
 $(b^{kq}-1,p^{s+1}M)$ and $s+1=\nu_p(p^{s+1}M)>\nu_p(d)$ by Theorem
 \ref{ppl2} we obtain that $d\not\in\mathcal{M}_b(p^{s+1}M)$. Now assume that $d=q$. Since $p$ divides
  $|b|_M$ there exists a prime $r$ divisor of $(b^{|b|_M}-1,p^{s+1}M)$, with $r\neq q$. By Theorem \ref{ppl2} we get a
  contradiction.

  Finally if $d=d'q$ with $|b|_M=kd'$, it is easy to see that
  $d\in\mathcal{M}_b(p^{s+1}M)$ implies that $d'\in
  \mathcal{M}_b(M)$.

 Thus, in this case we take
 $z=p^{s-c+1}$. We showed that if $d\in \mathcal{M}_b(zN)$,
 then $d=d'q$ where $|b|_N=kd$, $d'\in \mathcal{M}_b(M)$ and
 $\nu_q(d)=\nu_q(|b|_N)$. Since $|b|_M\in \mathcal{M}_b(M)$ then $|b|_N=q|b|_M\in
 \mathcal{M}_b(zN)$.
\end{enumerate}

\item Assume that $q=2$ and $b=2^{\gamma}-1$  for some
positive integer $\gamma\geq 2$.

We know, from Lemma \ref{motose}, that we can not find a prime $p$
such that $|b|_{p}=2$. So we follow a different procedure in this
case.

It is clear that $|b|_{q}=|b|_2=1$. Let $s=\nu_{2}(|b|_N)$ and
$c=\nu_2(N)$. Note that $c$ can not be strictly greater than $s$,
because $2$ divides $(b^k-1,N)$ and $\mathcal{M}_b(N)\neq
\varnothing$. We study the following cases:

\begin{enumerate}
\item $c=s$

By the assumption $c>0$. Suppose that there exists a $d\in
\mathcal{M}_b(N)$ such that $k$ is even. Thus $\nu_2(d)<s$. As $2$
divides \linebreak $(b^k-1,N)$ from Theorem \ref{ppl2} we have that
$c=\nu_2(N)\leq \nu_2(d)$ which is a contradiction. Therefore, it is
enough to take $z=1$.

\item $s>c$

In this case we take $z=2^{s-c}$. Since $|b|_{2^s}$ divides
$2^{s-1}$, then $|b|_{zN}=[|b|_{2^{s}},|b|_M]=|b|_M=|b|_N$. Hence,
$\mathcal{M}_b(zN)=\{d\in\mathcal{M}_b(N):|b|_N=kd \text{ and }
\nu_2(d)=\nu_2(|b|_N)\}.$

Indeed, from Theorem \ref{ppl2} we have that $d\in\mathcal{M}_b(N)$
is an element of $\mathcal{M}_b(zN)$ if and only if $s=\nu_2(zN)\leq
\nu_2(d)$ and this is equivalent to say that $\nu_2(d)=s$. Since
$|b|_N\in \mathcal{M}_b(N)$ and $s=\nu_2(|b|_N)$, we have that
$|b|_N\in \mathcal{M}_b(zN)$.
\end{enumerate}
\end{enumerate}
\end{proof}

\begin{theorem}
Let $N$ and $b$ be integers such that  $\left\vert
\mathcal{M}_{b}(N)\right\vert
>1$. Then, there
exists a positive integer $z$ such that $\mathcal{M}%
_{b}(zN) =\{|b|_N\}$.
\end{theorem}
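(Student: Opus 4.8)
The plan is to iterate Lemma~\ref{conditions} once for every prime dividing $|b|_N$, accumulating the valuation constraints it produces until they force every element of the Midy set to coincide with $|b|_N$.

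First I would observe that the hypothesis $\left\vert\mathcal{M}_{b}(N)\right\vert>1$ gives in particular $\mathcal{M}_{b}(N)\neq\varnothing$, hence $|b|_N\geq 2$, and I list the distinct prime divisors of $|b|_N$ as $q_1,\dots,q_r$ with $r\geq 1$. Put $N_0=N$. I then build, by induction on $i$, positive integers $z_i$ and $N_i=z_iN$ such that $|b|_{N_i}=|b|_N$, $\mathcal{M}_{b}(N_i)\neq\varnothing$, and for every $d\in\mathcal{M}_{b}(N_i)$ and every $j\leq i$ one has $\nu_{q_j}(d)=\nu_{q_j}(|b|_N)$. The case $i=0$ is vacuous. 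For the inductive step I apply Lemma~\ref{conditions} to the integer $N_i$ and the prime $q_{i+1}$ (which divides $|b|_{N_i}=|b|_N$): this produces a positive integer $w$ with $|b|_{wN_i}=|b|_{N_i}$, $\mathcal{M}_{b}(wN_i)\neq\varnothing$, and $\nu_{q_{i+1}}(d)=\nu_{q_{i+1}}(|b|_N)$ for all $d\in\mathcal{M}_{b}(wN_i)$. Setting $N_{i+1}=wN_i$ and $z_{i+1}=wz_i$ then completes the step, provided we check that the constraints already secured for $q_1,\dots,q_i$ survive. This is exactly part~1 of Theorem~\ref{john}: since $N_i\mid N_{i+1}$ and $|b|_{N_{i+1}}=|b|_{N_i}$, we get $\mathcal{M}_{b}(N_{i+1})\subseteq\mathcal{M}_{b}(N_i)$, so any $d\in\mathcal{M}_{b}(N_{i+1})$ inherits $\nu_{q_j}(d)=\nu_{q_j}(|b|_N)$ for $j\leq i$, while Lemma~\ref{conditions} supplies the case $j=i+1$.

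After $r$ steps I take $z=z_r$, so $zN=N_r$, $|b|_{zN}=|b|_N$, and $\mathcal{M}_{b}(zN)\neq\varnothing$. Every $d\in\mathcal{M}_{b}(zN)$ then satisfies $\nu_{q_j}(d)=\nu_{q_j}(|b|_N)$ for all $j=1,\dots,r$. By the very definition of the Midy set such a $d$ divides $|b|_{zN}=|b|_N$, so its prime factors lie among $q_1,\dots,q_r$; hence $d=\prod_{j=1}^{r}q_j^{\nu_{q_j}(d)}=\prod_{j=1}^{r}q_j^{\nu_{q_j}(|b|_N)}=|b|_N$. Therefore $\mathcal{M}_{b}(zN)=\{|b|_N\}$.

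The real difficulty is entirely absorbed into Lemma~\ref{conditions}, namely engineering, prime by prime, that inflating $N$ pins down the $q$-adic valuation of every Midy exponent without emptying the Midy set; the delicate subcases there (above all the exceptional case $q=2$ with $b=2^{\gamma}-1$, where Lemma~\ref{motose} forbids a prime $p$ with $|b|_p=2$) are where the work sits. In the assembly above, the only point needing attention is the monotonicity $\mathcal{M}_{b}(N_{i+1})\subseteq\mathcal{M}_{b}(N_i)$, which prevents later rounds from undoing earlier ones; the rest is bookkeeping.
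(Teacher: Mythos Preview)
Your proof is correct and follows essentially the same route as the paper: iterate Lemma~\ref{conditions} over the prime divisors of $|b|_N$, use part~1 of Theorem~\ref{john} to ensure the valuation constraints from earlier rounds persist, and conclude that any surviving $d$ must equal $|b|_N$. The only cosmetic difference is that you package the iteration as an explicit induction on $N_i=z_iN$, whereas the paper describes the same process more informally.
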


\begin{proof} Let
$\left\vert b\right\vert _{N}=q_{1}^{t_{1}}\ldots q_{l}^{t_{l}}$ be
the prime factorization of $|b|_N$.

Applying Lemma \ref{conditions} to $q_1$ and $N$ we can find a
positive integer $z_1$ such that $|b_{z_1N}|=|b|_N$,
$\mathcal{M}_b(z_1N)\neq \varnothing$ and when $d\in
\mathcal{M}_b(z_1N)$, then $\nu_{q_1}(d)=\nu_{q_1}(|b|_N)$. Again
using Lemma \ref{conditions} with $q=q_2$ and $z_1N$, we get a
positive integer $z_2$ such that $|b|_{z_1z_2N}=|b|_N$,
$\mathcal{M}_b(z_1z_2N)\neq \varnothing$, and  $d\in
\mathcal{M}_b(z_1z_2N)$, implies that
$\nu_{q_2}(d)=\nu_{q_2}(|b|_N)$. From Theorem \ref{john} we know
that $\mathcal{M}_b(z_1z_2N)\subseteq \mathcal{M}_b(z_1N)$. In this
way for each $d\in \mathcal{M}_b(z_1z_2N)$ we also have that
$\nu_{q_1}(d)=\nu_{q_1}(|b|_N)$.

Repeating this process we get positive integers $z_1,\ldots,z_l$
such that if $z=\prod_{i=1}^l{z_i}$, the following properties hold
\begin{enumerate}
\item $|b|_{zN}=|b|_N$,
\item $\mathcal{M}_b(zN)\neq \varnothing$,
\item If $d\in \mathcal{M}_b(zN)$, then $\nu_{q_i}(d)=\nu_{q_i}(|b|_N)$
for all $i\in \{1,\ldots,l\}$.
\end{enumerate}

Since the $q_i$'s are the prime factors of  $|b|_N$, we conclude
that $d=|b|_N$ and therefore $\mathcal{M}_b(zN)=\{|b|_N\}$.
\end{proof}

\section*{Acknowledgements}
The authors are members of the research group: \'Algebra, Teor\'ia
de N\'umeros y Aplicaciones, ERM. J.H. Castillo was partially
supported by CAPES, CNPq from Brazil and Universidad de Nariño from
Colombia. J.M. Velásquez-Soto was partially supported by CONICET
from Argentina and Universidad del Valle from Colombia.

\nocite{Martin}

\bibliographystyle{amsalpha}
\bibliography{bibliografiaggp}

\end{document}